\newcommand{\bF}{\mathbf{F}}
\newcommand{\KK}{\mathbb{K}}
\newcommand{\PP}{\mathbb{P}}
\newcommand{\CC}{\mathbb{C}}
\newcommand{\A}{\mathbb{A}}
\newcommand{\NN}{\mathbb{N}}
\newcommand{\Perm}{\mathbf{perm}}
\newcommand{\Det}{\mathbf{det}}
\newcommand{\cX}{\mathcal{X}}
\DeclareMathOperator{\pr}{pr}
\newcommand{\bpr}{\underline{\pr}}
\DeclareMathOperator{\tr}{tr}
\let\wr\relax
\DeclareMathOperator{\wr}{wr}
\DeclareMathOperator{\spec}{Spec}
\newtheorem{thm}{Theorem}[section]
\newtheorem{prop}[thm]{Proposition}
\theoremstyle{definition}
\newtheorem{rem}[thm]{Remark}
\title{Product Ranks of the $3\times 3$ Determinant and Permanent}
\date{March 16, 2015}
\author{Nathan Ilten}
\address{Department of Mathematics, Simon Fraser University,
8888 University Drive, Burnaby BC V5A1S6, Canada}
\email{nilten@sfu.ca}
\author{Zach Teitler}
\address{Department of Mathematics,
Boise State University,
1910 University Drive
Boise, ID 83725-1555, USA}
\email{zteitler@boisestate.edu}
\keywords{Product rank, tensor rank, determinant, permanent, Fano schemes}
\subjclass[2010]{15A21, 15A69, 14M12, 14N15}
\begin{document}

\begin{abstract}
We show that the product rank of the $3 \times 3$ determinant $\Det_3$ is $5$,
and the product rank of the $3 \times 3$ permanent $\Perm_3$ is $4$.
As a corollary, we obtain that the tensor rank of $\Det_3$ is $5$ and the tensor rank of $\Perm_3$ is $4$.
We show moreover that the border product rank of $\Perm_n$ is larger than $n$ for any $n\geq 3$.
\end{abstract}
\maketitle
\section*{Introduction}

Let $A = (a_{ij})$ be an $n \times n$ matrix.
Recall that the permanent of $A$, denoted $\Perm(A)$, is given by
\[
  \Perm(A) = \sum_{\sigma \in S_n} a_{1\sigma(1)} \dotsm a_{n\sigma(n)},
\]
the sum over the symmetric group $S_n$ of permutations of $\{1,\dotsc,n\}$.
We write $\Perm_n = \Perm((x_{ij}))$ for the permanent of the $n \times n$ generic matrix, that is,
a matrix whose entries are independent variables.
The definition writes $\Perm_n$ as a sum of $n!$ terms which are products of linear forms, in fact variables.
Allowing terms involving products of linear forms other than variables allows for more efficient representations.
For example Ryser's identity \cite{MR0150048} gives
\[
  \Perm_n = \sum_{S \subseteq \{1,\dotsc,n\}} (-1)^{n - |S|} \prod_{i=1}^n \sum_{j \in S} x_{ij} .
\]
This uses $2^n-1$ terms.
Even better, Glynn's identity \cite{MR2673027} gives
\[
  \Perm_n = \sum_{\substack{\epsilon \in \{\pm1\}^n \\ \epsilon_1 = 1}} \prod_{i=1}^n \sum_{j=1}^n \epsilon_i \epsilon_j x_{ij} .
\]
This uses $2^{n-1}$ terms.
For example, $\Perm_3$ can be written as a sum of $4$ terms which are products of linear forms.
Explicitly,
\[
\begin{split}
  \Perm_3 &= (x_{11} + x_{12} + x_{13})(x_{21} + x_{22} + x_{23})(x_{31} + x_{32} + x_{33}) \\
    &- (x_{11} + x_{12} - x_{13})(x_{21} + x_{22} - x_{23})(x_{31} + x_{32} - x_{33}) \\
    &- (x_{11} - x_{12} + x_{13})(x_{21} - x_{22} + x_{23})(x_{31} - x_{32} + x_{33}) \\
    &+ (x_{11} - x_{12} - x_{13})(x_{21} - x_{22} - x_{23})(x_{31} - x_{32} - x_{33}) .
\end{split}
\]
We will show that it is not possible to write $\Perm_3$ as a sum of $3$ or fewer such terms.
In fact, we will show that it is not possible to write $\Perm_3$ as a limit of cubic polynomials using $3$ or fewer such terms.

Similarly we write $\Det_n$ for the determinant of an $n \times n$ generic matrix.
The Laplace expansion writes $\Det_n$ as a sum of $n!$ monomials.
In particular $\Det_3$ is a sum of $6$ monomials; until recently it was not clear whether $\Det_3$ could be written as a sum
of products of linear forms using $5$ or fewer terms.
However Derksen recently found such an expression \cite[\textsection 8]{Derksen:2013sf}:
\[
\begin{split}
 \Det_3 = \frac{1}{2} \Big( & (x_{13}+x_{12})(x_{21}-x_{22})(x_{31}+x_{32}) \\
   & + (x_{11}+x_{12})(x_{22}-x_{23})(x_{32}+x_{33}) \\
   & + 2 x_{12}(x_{23}-x_{21})(x_{33}+x_{31}) \\
   & + (x_{13}-x_{12})(x_{22}+x_{21})(x_{32}-x_{31}) \\
   & + (x_{11}-x_{12})(x_{23}+x_{22})(x_{33}-x_{32})
   \Big) .
\end{split}
\]
In hindsight it should have been clear that such an expression must exist. Indeed, over e.g.\ $\CC$,
$\Det_3$ can be regarded as a tensor in $\CC^3 \otimes \CC^3 \otimes \CC^3$, and
it is known that all such tensors have rank at most $5$ \cite{MR3089693}. As we shall see, this implies an expression involving at most $5$ products of linear forms.
Nevertheless, this does not seem to have been noticed previously.

In any case $\Det_3$ can be written as a sum of $5$ products of linear forms.
We show that  is not possible to write $\Det_3$ as a sum of $4$ or fewer such terms.

For both the permanent and determinant, the key ingredient in our proofs is an analysis of certain Fano schemes parametrizing linear subspaces
contained in the hypersurfaces $\Perm_3 = 0$ and $\Det_3 = 0$. We hope that our techniques may be employed to attack other similar problems in tensor rank and algebraic complexity theory.

\subsection*{Acknowledgements} We thank J.M.~Landsberg for helpful comments.

\section{Product Rank}

\subsection{Basic notions}
Throughout we work over some fixed field $\KK$ of characteristic zero.
Recall that the \emph{rank} or \emph{tensor rank} of a tensor $T \in V_1 \otimes \dotsb \otimes V_k$
is the least number of terms $r$ in an expression
\[
  T = \sum_{i = 1}^r v_{1i} \otimes \dotsb \otimes v_{ki}.
\]
We denote the tensor rank by $\tr(T)$.
Recall also that the \emph{Waring rank} of a homogeneous form $F$ of degree $d$
is the least number of terms $r$ in an expression
\[
  F = \sum_{i = 1}^r c_i l_i^d,
\]
where each $l_i$ is a homogeneous linear form and each $c_i \in \KK$.
We denote the Waring rank $\wr(F)$.
For overviews of tensor rank and Waring rank, including applications and history,
 we refer to \cite{MR2535056}, \cite{MR2447451}, \cite{MR2865915}.

Here we are concerned with the product rank, also called split rank or Chow rank,
see for example \cite{MR3166068}.
For a homogeneous form $F$ of degree $d$, the \emph{product rank}, denoted $\pr(F)$,
is the least number of terms $r$ in an expression
\[
  F = \sum_{i = 1}^r \prod_{j = 1}^d l_{ij},
\]
each $l_{ij}$ a homogeneous linear form. This is related to the minimum size of any homogeneous $\Sigma\Pi\Sigma$-circuit computing $F$, see \cite[\S 8]{Landsberg:2014kq}  for details.

The \emph{border product rank} $\bpr(F)$ is the least $r$ such that
$F$ is a limit of forms of product rank $r$: \[\lim_{t\to 0} F_t = F\] for some forms $F_t$ with $\pr(F_t) = r$ for $t \neq 0$.
Taking the constant family $F_t = F$ shows $\bpr(F) \leq \pr(F)$.

Note that $\bpr(F) = r$ if and only if $F$ lies in the closure of the locus of forms of product rank $r$,
but not in the closure of the forms of product rank $r-1$.
The closure of the forms of product rank $r$ is exactly the $r$th secant variety
of the variety of completely decomposable forms, that is, forms which decompose as products of linear forms.
The latter is also called the split variety or the Chow variety of zero-cycles of degree $d$ in (the dual space) $\PP^n$.
So $\bpr(F) = r$ if $F$ lies on the $r$th, but not the $(r-1)$st, secant variety of the Chow variety.
Furthermore, $\pr(F) = r$ if $F$ lies in the span of some $r$ distinct points on the Chow variety.
See \cite{MR3166068} for details.

\subsection{Waring rank and product rank}
Evidently $\pr(F) \leq \wr(F)$.
On the other hand, the expression
\[
  l_1 \dotsm l_d = \frac{1}{2^{d-1} d!} \sum_{\substack{\epsilon \in \{\pm1\}^d \\ \epsilon_1 = 1}}
    \big( \prod \epsilon_i \big) \big( \sum \epsilon_i l_i \big)^d
\]
means that 
\[
  \wr(l_1 \dotsm l_d) \leq 2^{d-1}.
\]
In fact, it is equal when the $l_i$ are linearly independent \cite{MR2842085}. In any case, we thus have
\[
  \wr(F) \leq 2^{d-1} \pr(F).
\]
For our purposes, this means that a lower bound for Waring rank implies a lower bound for product rank.
And in fact, lower bounds for the Waring ranks of determinants and permanents have been found
by Shafiei \cite{Shafiei:ud} and Derksen and the second author \cite{Derksen:2014hb}:
\[
  \wr(\Perm_n) \geq \frac{1}{2} \binom{2n}{n}, \qquad \wr(\Det_n) \geq \binom{2n}{n} - \binom{2n-2}{n-1} .
\]
For $n = 3$, this is $\wr(\Perm_3) \geq 10$ and $\wr(\Det_3) \geq 14$.
Hence, $\pr(\Perm_3) \geq 3$ and $\pr(\Det_3) \geq 4$.
On the other hand, the Glynn and Derksen identities above show $\pr(\Perm_3) \leq 4$ and $\pr(\Det_3) \leq 5$. We will show that one cannot do better than this, that is, $\bpr(\Perm_3)=\pr(\Perm_3) = 4$ and $\pr(\Det_3) = 5$.

\subsection{Tensor rank and product rank}
There is also a connection between tensor rank and product rank.
Tensors in $V_1 \otimes \dotsb \otimes V_d$ can be naturally identified with multihomogeneous forms
of multidegree $(1,\dotsc,1)$ on the product space $V_1 \times \dotsb \times V_d$.
Explicitly let each $V_i$ have a basis $x_{i1},\dotsc,x_{in_i}$
and consider polynomials in the $x_{ij}$ with multigrading in $\NN^d$
where each $x_{ij}$ has multidegree $e_i$, the $i$th basis vector of $\NN^d$.
Then each simple (basis) tensor $x_{1j_1} \otimes \dotsb \otimes x_{dj_d}$ is multihomogeneous
of multidegree $(1,\dotsc,1)$ and in fact tensors correspond precisely to multihomogeneous forms
of this multidegree.

Arbitrary simple tensors $v_1 \otimes \dotsb \otimes v_d$ correspond to products of linear forms
$l_1 \dotsm l_d$ with each $l_i$ multihomogeneous of multidegree $e_i$.
Hence $\tr(T) \geq \pr(T)$, where we slightly abuse notation by writing $T$ for both a tensor
and the corresponding multihomogeneous polynomial.

In particular our results will show $\tr(\Perm_3) \geq 4$ and $\tr(\Det_3) \geq 5$.
On the other hand the Glynn and Derksen identities involve sums of products of linear forms
which happen to be multihomogeneous (in the rows of the $3 \times 3$ matrix),
hence correspond to tensor decompositions.
So $\tr(\Perm_3) \leq 4$ and $\tr(\Det_3) \leq 5$.
In fact, Derksen gave his identity originally in tensor form.

\section{The Permanent}

\begin{thm}\label{thm:perm}
Let $n>2$. Then we have $\bpr (\Perm_n)> n$.
\end{thm}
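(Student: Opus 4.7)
The plan is to study the Fano scheme $\mathcal{F}$ of codimension-$n$ affine linear subspaces of the permanental hypersurface $V(\Perm_n)\subseteq \mathbb{A}^{n^2}$, and to show it is too small to accommodate the structure forced by $\bpr(\Perm_n)\leq n$.

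First, the product-rank lower bound on $\mathcal{F}(V(F))$: given $F=\sum_{i=1}^n\prod_{j=1}^n \ell_{ij}$ with $\pr(F)\leq n$ and any tuple $(j_1,\ldots,j_n)\in\{1,\ldots,n\}^n$, the intersection $W_{(j_i)}:=V(\ell_{1,j_1})\cap\cdots\cap V(\ell_{n,j_n})$ is a codimension-$n$ linear subspace contained in $V(F)$, since the $i$th summand already vanishes on it. For generic $\ell_{ij}$ these $n^n$ subspaces are pairwise distinct, so $\mathcal{F}(V(F))$ contains $n^n$ geometric points. To promote this to a border statement, I would write $\Perm_n=\lim_{t\to 0} F_t$ with $\pr(F_t)\leq n$, note that each pick $(j_i)$ gives a morphism $\Delta^*\to \text{Gr}(n^2-n,n^2)$ that extends across $t=0$ by properness of the Grassmannian, and observe that the scheme-theoretic union of these $n^n$ sections is flat of relative length $n^n$ over $\Delta$; its central fiber is therefore a length-$n^n$ closed subscheme of $\mathcal{F}(V(\Perm_n))$.

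The geometric input is then to show that, for $n\geq 3$, $\mathcal{F}(V(\Perm_n))$ is zero-dimensional, reduced, and consists of exactly the $2n$ row subspaces $R_i=\{(a_{jk}):a_{ik}=0\ \forall k\}$ and column subspaces $C_j=\{(a_{jk}):a_{ij}=0\ \forall i\}$. The combinatorial core is a König-type covering fact: any set of $n$ positions in an $n\times n$ grid that meets every permutation matrix is either a full row or a full column, since for smaller cuts of $K_{n,n}$ Hall's condition still admits a perfect matching in the complement. Extending this from coordinate subspaces to arbitrary codim-$n$ linear subspaces of $V(\Perm_n)$, and verifying reducedness of $\mathcal{F}$ at each such subspace by a tangent-space computation, should follow from exploiting the multilinearity of $\Perm_n$ in its rows together with the action of the row/column-permutation and diagonal-scaling symmetry group.

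Given this classification, the contradiction is immediate: $n^n > 2n$ for every $n\geq 3$, so a length-$n^n$ subscheme cannot sit inside a length-$2n$ reduced scheme. The main obstacle will be the classification step: ruling out codim-$n$ linear subspaces of $V(\Perm_n)$ whose defining ideals are not spanned by matrix entries (the combinatorial argument only handles coordinate subspaces directly), and carrying out the tangent-space calculation at each row/column subspace to verify reducedness. A secondary technical point is confirming that the flat closure of the $n^n$ disjoint pick-sections in $\text{Gr}\times\Delta$ genuinely produces a length-$n^n$ subscheme over $t=0$; this should come from properness of the Grassmannian and constancy of Hilbert polynomial for a flat family of $0$-dimensional subschemes.
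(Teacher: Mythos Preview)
Your approach is essentially the paper's: compare the Fano scheme of $n(n-1)$-planes on $V(\Perm_n)$, which has exactly $2n$ reduced points, with that of a generic product-rank-$n$ form, which has at least $n^n$, and use semicontinuity. The step you flag as the main obstacle---classifying \emph{all} codimension-$n$ linear subspaces of $V(\Perm_n)$, not just the coordinate ones your K\"onig argument handles, and verifying reducedness---is precisely what the paper does not prove directly but imports from \cite[Cor.~5.6]{Chan:2014la}; your sketch does not yet close that gap. Two minor technical points: (i) to guarantee the $n^n$ picks are genuinely distinct (and land in the correct Grassmannian) you should first observe that since $\Perm_n$ involves all $n^2$ variables, the nearby $F_t$ do too, forcing the $\ell_{ij}^{(t)}$ to be linearly independent and hence $F_t$ linearly equivalent to the model form $\sum_i \prod_j x_{ij}$; (ii) the paper replaces your flat-closure-of-sections argument with upper semicontinuity of fiber dimension for the relative Fano scheme together with a degree inequality, which sidesteps the need to track individual sections.
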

\begin{proof}
Suppose that $\bpr(\Perm_n)\leq n$. Then there exists a smooth curve $C$ with special point $0\in C$ and an irreducible family $\cX\subset \KK^{n^2}\times C$
 with $\pi:\cX\to C$ the projection such that 
\[
\pi^{-1}(0)=\cX_0=V(\Perm_n)
\]
and for $c\neq 0$, $\pi^{-1}(c)=\cX_c$ is the vanishing locus of 
\[
F=\sum_{i=1}^n \prod_{j=1}^n x_{ij}
\]
in $\KK^{n^2}$ up to a homogeneous linear change of coordinates.

Let $\bF(\cX_c)$ denote the Fano scheme parametrizing $k=n(n-1)$-dimensional linear spaces contained in $\cX_c\subset\KK^{n^2}$; see \cite{eisenbud:00a} for details on Fano schemes. 
Then $\bF(\cX_0)$ consists of exactly $2n$ isolated points, see
\cite[Cor.\ 5.6]{Chan:2014la}.
The corresponding $k$-planes arise exactly by zeroing out one row or one column of an $n\times n$ matrix.
In any case, $\bF(\cX_0)$ is zero-dimensional, of degree $2n$.

On the other hand, for $c\neq 0$, $\bF(\cX_c)$ contains at least $n^n$ points.\footnote{In fact, a straightforward calculation shows that there are exactly $n^n$ points in this Fano scheme.} Indeed, the $k$-plane $V(x_{1j_1},\ldots,x_{nj_n})$ is clearly contained in $V(F)$ for any $1\leq j_1,\ldots,j_n\leq n$.
But this is impossible. Indeed, $\dim \bF(\cX_c)\leq \dim \bF(\cX_0)$ by semicontinuity of fiber dimension of proper morphisms \cite[\S 13.1.5]{EGA4}, since these Fano schemes appear as fibers in the proper map from the relative Fano scheme of $\cX/C$ to $C$.
Hence, $\dim \bF(\cX_c)=0$, so $\deg \bF(\cX_c)\geq n^n >\deg \bF(\cX_0)$, which contradicts e.g.~\cite[Proposition 4.2]{ilten:14a}.
%
%
%
%
%
%
\end{proof}

\begin{rem}
In the case $n=3$, it follows that \[\tr(\Perm_3)=\pr (\Perm_3)=\bpr(\Perm_3)=4,\] since Glynn's identity gives an explicit expression showing $\pr(\Perm_3) \leq \tr(\Perm_3)\leq 4$.
On the other hand, for $n>3$, the resulting bound $\pr(\Perm_n)>n$ is weaker than the bound
$\pr(\Perm_n) \geq \frac{1}{2^n}\binom{2n}{n} \approx \frac{2^n}{\sqrt{n\pi}}$
obtained from Shafiei's bound for $\wr(\Perm_n)$.
However, our bound on $\bpr(\Perm_n)$ is the best bound we know.
\end{rem}

\section{The Determinant}

\begin{thm}\label{thm:det}
We have $\tr(\Det_3) = \pr(\Det_3) = 5$.
\end{thm}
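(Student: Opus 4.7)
The upper bound $\pr(\Det_3)\le 5$ is Derksen's identity in the introduction; since every term of that identity is multi-homogeneous in the rows, the same identity also gives $\tr(\Det_3)\le 5$. Combined with $\tr\ge \pr$ established in \S 1.3, it suffices to prove the lower bound $\pr(\Det_3)\ge 5$. I argue by contradiction, assuming
\[
\Det_3 \;=\; \sum_{i=1}^{4} l_{i1}\, l_{i2}\, l_{i3}
\]
for linear forms $l_{ij}$ on $\KK^9$, and reach a contradiction via a Fano-scheme analysis analogous to Theorem~\ref{thm:perm}.

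The key geometric input is the Fano scheme $\bF(V(\Det_3))$ of $6$-dimensional linear subspaces of $V(\Det_3)\subset \KK^9$. By classical results on determinantal varieties, $\bF(V(\Det_3))$ has exactly two irreducible components, each isomorphic to $\PP^2$: the ``column'' family $\{A:Av=0\}$ parametrized by $v\in\PP^2$ and the ``row'' family $\{A:w^{T}A=0\}$ parametrized by $w\in\PP^2$; in particular $\dim \bF(V(\Det_3))=2$. To exploit this, I form the one-parameter perturbation $F_c = \sum_{i}\prod_{j}(l_{ij}+c\,m_{ij})$ for generic linear forms $m_{ij}$, producing a family $\cX\to C$ with $\cX_0=V(\Det_3)$, $\cX_c=V(F_c)$, and $\pr(F_c)\le 4$ throughout. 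A $6$-plane in $V(F_c)$ corresponds to a $3$-dimensional subspace $W\subset(\KK^9)^{*}$ satisfying $\sum_{i}\bar l_{i1}\bar l_{i2}\bar l_{i3}=0$ in the six-variable quotient $\KK[\KK^9]/(W)$; this imposes $\binom{8}{3}=56$ linear conditions on the $18$-dimensional Grassmannian $\mathrm{Gr}(3,9)$, so for generic $c$, $\bF(\cX_c)$ is empty.

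The contradiction then parallels Theorem~\ref{thm:perm}: the relative Fano scheme $\bF(\cX/C)\to C$ is proper, yet its fiber Hilbert polynomial jumps from zero (empty) at general $c$ to that of $\PP^2\sqcup\PP^2$ at $c=0$, in conflict with an appropriate refinement of \cite[Proposition 4.2]{ilten:14a}. The main obstacle is precisely this refinement: \cite[Proposition 4.2]{ilten:14a} as used in the permanent proof governs degrees of zero-dimensional fibers, whereas here the special fiber carries positive-dimensional components, so the permanent-style degree comparison does not apply verbatim. If extending the semicontinuity framework proves delicate, an alternative and more hands-on route is to directly classify $6$-planes in $V(F)$ for any $F=\sum_{i=1}^{4}\prod_{j}l_{ij}$, showing by combinatorial analysis of how $W$ must partition the four products that $\bF(V(F))$ cannot contain two disjoint $2$-dimensional families of $6$-planes.
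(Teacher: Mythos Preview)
Your degeneration argument has a fatal direction error, not merely a technical gap. In the permanent proof the contradiction comes from the \emph{general} fiber of the relative Fano scheme being \emph{larger} (degree $\ge n^n$) than the special fiber (degree $2n$); upper semicontinuity of length over a curve then yields the contradiction. In your setup you have arranged exactly the opposite: the special fiber $\bF(V(\Det_3))\cong\PP^2\sqcup\PP^2$ is nonempty and positive-dimensional, while your general fiber $\bF(V(F_c))$ is (allegedly) empty. That is entirely consistent with semicontinuity of fiber dimension and with upper semicontinuity of any reasonable invariant --- special fibers are allowed to be bigger. No ``refinement'' of \cite[Proposition~4.2]{ilten:14a} will reverse this; the strategy of perturbing to a generic sum of four products simply cannot produce a contradiction along these lines. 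Your naive dimension count, even if correct, therefore proves nothing. Your alternative route --- classifying $6$-planes on an arbitrary $V\big(\sum_{i=1}^4\prod_j l_{ij}\big)$ --- is in fact the real content of the problem, and you have not begun it.

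The paper does not use a degeneration for the determinant at all. Instead it works with the fixed ``universal'' hypersurface
\[
X=V\big(y_1y_2y_3+y_4y_5y_6+y_7y_8y_9+y_{10}y_{11}y_{12}\big)\subset\KK^{12},
\]
so that an expression $\Det_3=\sum_{i=1}^4 l_{i1}l_{i2}l_{i3}$ corresponds to a $9$-plane $L\subset\KK^{12}$ with $X\cap L=V(\Det_3)$. The $\PP^2$ of $6$-planes in $V(\Det_3)$ must then sit inside some irreducible component $Z$ of $\bF(X)$ whose $6$-planes are not all trapped in a coordinate hyperplane (else $\pr(\Det_3)\le 3$). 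The substantive step is Proposition~\ref{prop:fano}: a torus-fixed-point plus computer-assisted primary-decomposition argument showing that every such $Z$ is $4$-dimensional with an explicit product parametrization by $(p,q,r,s)\in\A^4$. One then checks that a $\PP^2$ inside $Z$ whose $6$-planes all lie in a common $9$-plane $L$ must have two of $p,q,r,s$ constant, which forces $L$ to be cut out by equations making $X\cap L$ a sum of only two monomial products --- contradicting $\pr(\Det_3)>3$. The hard work is the explicit classification of components of $\bF(X)$, precisely the ``hands-on'' step you gestured at but did not carry out.
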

Before beginning the proof, we need a result about a special Fano scheme. Let
$$
X=V(y_1y_2y_3+\dotsb+y_{10}y_{11}y_{12})\subset \KK^{12}=\spec \KK[y_1,\ldots,y_{12}],
$$
and let $\bF(X)$ be the Fano scheme parametrizing $6$-dimensional linear spaces of $X$. 
Let $G$ be the subgroup of $S_{12}$ acting by permuting coordinates which maps $X$ to itself.

\begin{prop}\label{prop:fano}
Consider any irreducible component $Z$ of $\bF(X)$ such that the $6$-planes parametrized by $Z$ do not all lie in a coordinate hyperplane of $\KK^{12}$. Then $Z$ is $4$-dimensional, and it can be covered by affine spaces $\A^4=\spec \KK[p,q,r,s]$. The corresponding parametrization of $6$-planes is given by the rowspan of 
$$
\left(\begin{array}{c c c c c c c c c c c c}
1 &  &  & p &  &  &  &  &  &  &  & \\
 & 1 &  &  & q &  &  &  &  &  &  & \\
 &  & -pq &  &  & 1 &  &  &  &  &  & \\
 &  &  &  &  &  & 1 &  &  & r &  & \\
 &  &  &  &  &  &  & 1 &  &  & s &  \\
 &  &  &  &  &  &  &  & -rs &  &  & 1 \\
\end{array}\right)
$$
up to some permutation in $G$.

\end{prop}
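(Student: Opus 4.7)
The plan is to translate $L \subset X$ into a cubic identity among the restrictions $\bar y_i := y_i|_L$, use this identity to force a rigid pairing of the four blocks of three coordinates, and then read off the explicit parametrization.

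I would first fix a generic $L$ in an irreducible component $Z$ as in the hypothesis; by hypothesis, each $\bar y_i$ is a nonzero linear form on $L \cong \KK^6$. Setting $g_k := \bar y_{3k-2} \bar y_{3k-1} \bar y_{3k}$, the condition $L \subset X$ becomes the cubic identity $g_1 + g_2 + g_3 + g_4 = 0$ in $\operatorname{Sym}^3 L^*$. The central structural claim is that, after applying a suitable $\sigma \in G$, one has $g_1 = -g_2$ and $g_3 = -g_4$, so that the four blocks pair up. The guiding observation is that $g_1 \in (g_2, g_3, g_4)$ in $\KK[L]$, so $V(g_2) \cap V(g_3) \cap V(g_4) \subset V(g_1)$; choosing one linear factor from each of $g_2, g_3, g_4$ gives a $3$-dimensional linear subspace of $L$ which, by irreducibility, must lie in one of the three hyperplanes defining $V(g_1)$, producing a nontrivial linear dependence among four of the $\bar y_i$. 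Iterating over all $27$ such choices, and over the ways to single out each of the four blocks in turn, yields a large system of linear dependencies; combined with unique factorization in $\KK[L]$ and a case analysis to exclude degenerate configurations (which either place $L$ in a coordinate hyperplane -- excluded by hypothesis -- or contribute only lower-dimensional loci), this should force the twelve forms $\bar y_i$ to partition into six pairs of proportional forms compatible with one of the three ways of partitioning $\{B_1, B_2, B_3, B_4\}$ into two pairs. The $S_4 \subset G$ action on blocks then lets us normalize so that $B_1$ pairs with $B_2$ and $B_3$ with $B_4$.

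Granting the pairing, $g_1 = -g_2$ reads $\bar y_1 \bar y_2 \bar y_3 = -\bar y_4 \bar y_5 \bar y_6$; unique factorization together with a permutation in $G$ acting on $B_2$ lets us write $\bar y_4 = p \bar y_1$, $\bar y_5 = q \bar y_2$, and $\bar y_3 = -pq\,\bar y_6$ for some scalars $p, q$, and analogous reasoning for blocks $3, 4$ gives parameters $r, s$. The six resulting linear relations on $\KK^{12}$ cut out precisely the rowspan of the displayed matrix, so $Z$ is $4$-dimensional and admits the claimed $\A^4$ chart; allowing $(p,q,r,s)$ to vanish corresponds to $L$ degenerating into a coordinate hyperplane, extending the parametrization across all of $\A^4$.

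The main obstacle is the pairing claim. For arbitrary products of three linear forms summing to zero, pairing is not forced: coincidences of factors produce explicit counterexamples. The argument must exploit both the coordinate origin of the $\bar y_i$ and the genericity of $L$ in the component $Z$ to rule out all non-pairing patterns and to show that the resulting pairing respects the block structure.
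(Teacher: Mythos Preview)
Your approach is quite different from the paper's. The paper exploits the torus $T \subset (\KK^*)^{12}$ preserving $X$: every irreducible component of $\bF(X)$ contains a $T$-fixed point, and up to the $G$-action there are only two such fixed points. This reduces the problem to two explicit Pl\"ucker charts, parametrized by $6 \times 12$ matrices $A$ and $B$ with $36$ free entries each; the containment condition yields ideals $I_A, I_B$, and a short Macaulay2 computation decomposes them, discarding components whose $6$-planes lie in a coordinate hyperplane. No factorization argument in $\operatorname{Sym}^3 L^*$ appears. If your direct route could be completed it would be more conceptual, but the paper simply computes.

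The genuine gap is exactly where you place it: the pairing claim. The containments $V(g_j)\cap V(g_k)\cap V(g_l)\subset V(g_i)$ do produce relations of the form ``some $\bar y_\alpha$ lies in the span of one $\bar y$ from each of the other three blocks,'' but these are three-term linear dependencies, not proportionalities, and the assignment $(j_2,j_3,j_4)\mapsto j_1$ is uncontrolled. Passing from this web of $4\cdot 27$ dependencies to the rigid conclusion that the twelve forms split into six proportional pairs aligned with a $2+2$ block partition is a substantial case analysis that you have not carried out; the phrase ``this should force'' concedes as much. Moreover, the appeals to ``coordinate origin of the $\bar y_i$'' and ``genericity of $L$ in $Z$'' are not doing identifiable work: the only constraint on the $\bar y_i$ is the cubic identity itself, and invoking genericity requires you to already know the dimensions of the competing loci, which is precisely what is at stake. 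Until the non-pairing configurations are explicitly excluded (either by showing they force some $\bar y_i=0$ or by bounding their locus below dimension $4$), the proposal is a plausible outline rather than a proof.
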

\begin{proof}
Consider the torus $T\subset (\KK^*)^{12}$ defined by the equations
$$
y_1y_2y_3=y_4y_5y_6=y_7y_8y_9=y_{10}y_{11}y_{12};
$$
$X$ is clearly fixed under the action of $T$. This torus $T$ also acts on $\bF(X)$, and, up to permutations by $G$,  has exactly the fixed points given by the spans of $e_5,e_6,e_8,e_9,e_{11},e_{12}$ and $e_3,e_6,e_8,e_9,e_{11},e_{12}$, respectively. Here, the $e_i$ are the standard basis of $\KK^{12}$.

Now, since every irreducible component of a projective scheme with a torus action contains a toric fixed point,
every irreducible component $Z$ of $\bF(X)$ must intersect one of the two Pl\"ucker charts containing the above two fixed points, up to permutations by $G$.
These two corresponding charts of the Grassmannian $G(6,12)$ are parametrized by the rowspans of the matrices
$$
A=\bgroup\makeatletter\c@MaxMatrixCols=12\makeatother\begin{pmatrix}{a}_{11}&
{a}_{12}&
{a}_{13}&
{a}_{14}&
1&
0&
{a}_{15}&
0&
0&
{a}_{16}&
0&
0\\
{a}_{21}&
{a}_{22}&
{a}_{23}&
{a}_{24}&
0&
1&
{a}_{25}&
0&
0&
{a}_{26}&
0&
0\\
{a}_{31}&
{a}_{32}&
{a}_{33}&
{a}_{34}&
0&
0&
{a}_{35}&
1&
0&
{a}_{36}&
0&
0\\
{a}_{41}&
{a}_{42}&
{a}_{43}&
{a}_{44}&
0&
0&
{a}_{45}&
0&
1&
{a}_{46}&
0&
0\\
{a}_{51}&
{a}_{52}&
{a}_{53}&
{a}_{54}&
0&
0&
{a}_{55}&
0&
0&
{a}_{56}&
1&
0\\
{a}_{61}&
{a}_{62}&
{a}_{63}&
{a}_{64}&
0&
0&
{a}_{65}&
0&
0&
{a}_{66}&
0&
1\\
\end{pmatrix}\egroup
$$
$$
B=\bgroup\makeatletter\c@MaxMatrixCols=12\makeatother\begin{pmatrix}{b}_{11}&
{b}_{12}&
1&
{b}_{13}&
{b}_{14}&
0&
{b}_{15}&
0&
0&
{b}_{16}&
0&
0\\
{b}_{21}&
{b}_{22}&
0&
{b}_{23}&
{b}_{24}&
1&
{b}_{25}&
0&
0&
{b}_{26}&
0&
0\\
{b}_{31}&
{b}_{32}&
0&
{b}_{33}&
{b}_{34}&
0&
{b}_{35}&
1&
0&
{b}_{36}&
0&
0\\
{b}_{41}&
{b}_{42}&
0&
{b}_{43}&
{b}_{44}&
0&
{b}_{45}&
0&
1&
{b}_{46}&
0&
0\\
{b}_{51}&
{b}_{52}&
0&
{b}_{53}&
{b}_{54}&
0&
{b}_{55}&
0&
0&
{b}_{56}&
1&
0\\
{b}_{61}&
{b}_{62}&
0&
{b}_{63}&
{b}_{64}&
0&
{b}_{65}&
0&
0&
{b}_{66}&
0&
1\\
\end{pmatrix}\egroup.
$$
Imposing the condition that these $6$-planes be contained in $X$ leads to the ideals $I_A\subset \KK[a_{ij}]$ and $I_B\subset \KK[b_{ij}]$ for the Pl\"ucker charts of $\bF(X)$.
We are interested in the irreducible decompositions of $V(I_A)$ and $V(I_B)$, in other words, in minimal primes of $I_A$ and $I_B$. Furthermore, since we only care about components parametrizing $6$-planes not lying in a hyperplane of $\KK^{12}$, we may discard any minimal primes containing all $a_{ij}$ or $b_{ij}$ for some fixed $j$.

Now, it is easy to see that $a_{i1}a_{i2}a_{i3}\in I_A$ for $i=1,\ldots,6$, and likewise, 
$b_{11}b_{12}$ and $b_{23}b_{24}$ are in $I_B$. Using the action of $G$, we may thus assume that for any minimal prime $P_A$ of $I_A$, $a_{11},a_{63}\in P_A$ and for any minimal prime $P_B$ of $I_B$,  $b_{11},b_{23}\in b_A$. We now proceed as follows starting with the ideal $J=I_A+\langle a_{11},a_{63}\rangle$ or $J=I_B+\langle b_{11},b_{23}\rangle$:
\begin{enumerate}
\item Find the minimal primes $\{P_1,\ldots,P_m\}$ of the ideal $J'$ generated by the monomials among a set of minimal generators of $J$;
\item Discard those $P_k$ such that $J+P_k$ contains all $a_{ij}$ or $b_{ij}$ for some fixed $j$;
\item Return to the first step, replacing $J$ by $J+P_k$ for each remaining prime $P_k$.
\end{enumerate}

We continue this process until it stabilizes, that is, among the $J+P_k$ we have no new ideals. Doing this calculation with {\tt Macaulay2} \cite{M2} (see Appendix \ref{sec:code} for code) takes less than 20 seconds on a modern computer. In the case of $I_A$, we are left with no ideals, that is, all minimal primes of $I_A$ contain all $a_{ij}$ for some fixed $j$. In the case of $I_B$, we are left with $8$ ideals, corresponding to components whose parametrization is exactly of the form postulated by the proposition. Each of these components is toric (with respect to a quotient of $T$) and projective, hence admits an invariant affine cover, each of whose charts contains a $T$-fixed point. The claim now follows.
\end{proof}

\begin{proof}[Proof of Theorem \ref{thm:det}]
We will use the fact that $6$-planes contained in $V(\Det_3)\subset \KK^{9}$ are parametrized by two copies of $\PP^2$, see \cite{Chan:2014la}. Furthermore, every point of $V(\Det_3)$ is contained in such a plane.

To begin with, we have that $\pr(\Det_3)>3$, as follows from the lower bound on the Waring rank of $\Det_3$. 
Let us assume that $\pr(\Det_3)= 4$.
We now consider the hypersurface $X$ from Proposition \ref{prop:fano}.
Our assumption implies that there is a $9$-dimensional linear subspace $L\subset \KK^{12}$ such that $V(\Det_3)=X\cap L$. Furthermore, there must be a component $Z$ of $\bF(X)$ containing a copy of $\PP^2$ such that the $6$-planes parametrized by this $\PP^2$ are all contained in $L$ (and hence in $V(\Det_3)$). Since these $6$-planes sweep out $V(\Det_3)$, the planes parametrized by the component $Z$ must not all be contained in a coordinate hyperplane $V(y_i)$ of $\KK^{12}$, otherwise
$L$ would be also be contained in $V(y_i)$. But in that case, we can clearly write $\Det_3$ as a sum of three products of linear forms, contradicting the assumption that $\pr(\Det_3)>3$.

We can now apply Proposition \ref{prop:fano} to the component $Z$.
On a local chart, the subvariety $\PP^2\subset Z$ must be cut out by setting either $p,q$ constant or $r,s$ constant.
Indeed, suppose that $p$ and $r$ are non-constant.
Each of $q$ and $s$ is either non-constant or constant but nonzero,
for if $q=0$ or $s=0$ is constant on the $\PP^2$ then the $6$-planes parametrized by the $\PP^2$ are contained
in a coordinate hyperplane in $\KK^{12}$.
Then $pq$ and $rs$ are also non-constant,
so the corresponding $6$-planes span at least a $10$-dimensional subspace of $\KK^{12}$ and hence cannot all be contained in $L$.

Thus, making use of symmetry, we may assume that $p,q$ are constant.
But if this is the case,  then $L$ must be cut out by 
\begin{align*}
y_3=-pq y_6, \quad y_4=py_1,\quad y_5=qy_2.
\end{align*}
Hence, up to homogeneous linear change of coordinates,
$X\cap L=V(\Det_3)\subset\KK^9$ is cut out by
$$
y_7y_8y_9+y_{11}y_{12}y_{13}
$$
which contradicts $\pr(\Det_3)>3$.

We conclude that $\pr(\Det_3)>4$.
Combining this with Derksen's identity shows that $\tr(\Det_3) = \pr(\Det_3)=5$.
\end{proof}

\appendix
\newpage
\section{Code for {\tt Macaulay2}}\label{sec:code}
\begin{verbatim}
R=QQ[x_1..x_12]
f=x_1*x_2*x_3+x_4*x_5*x_6+x_7*x_8*x_9+x_10*x_11*x_12
S=QQ[a_(1,1)..a_(6,6)]
N=transpose genericMatrix(S,6,6)
O=id_(S^6) 
M_A=N_{0,1,2,3}|O_{0,1}|N_{4}|O_{2,3}|N_{5}|O_{4,5}
M_B=N_{0,1}|O_{0}|N_{2,3}|O_{1}|N_{4}|O_{2,3}|N_{5}|O_{4,5}
T=S[s_1..s_6]
p_A=map(T,R,(vars T)* sub(M_A,T))
p_B=map(T,R,(vars T)* sub(M_B,T))
-- These are the ideals for the two charts:
I_A=ideal sub((coefficients p_A(f))_1,S)
I_B=ideal sub((coefficients p_B(f))_1,S)

--Detects if a component only contains linear spaces contained 
--in a coordinate hyperplane
lowRank=J->(genlist:=flatten entries mingens J;
    any(toList (1..6),i->(
        all(toList (1..6),j->member(a_(j,i),genlist)))))

--Deletes multiple occurrences of an ideal in a list
uniqueIdealList=L->(outlist:={};
    scan(L,i->(if not any(outlist,j->j==i) then outlist=outlist|{i}));
    outlist)     

--Writes an ideal as an intersection of multiple ideals, up to radical
partialDecomposition=J->(genlist:=flatten entries mingens J;
    monlist:=select(genlist,i->size i==1);
    dl:=decompose monomialIdeal ideal monlist;
    select(apply(dl,i->i+J),i->not lowRank i))

--verify that a_(i,1)*a_(i,2)*a_(i,3) are in I_A:
transpose mingens I_A
--by symmetry, can assume a_(1,1)=0, a_(6,3)=0
L1=partialDecomposition (I_A+ideal {a_(1,1),a_(6,3)});
L2=uniqueIdealList flatten (L1/partialDecomposition);
# flatten (L2/partialDecomposition)
--everything has low rank!

--verify that a_(1,1)*a_(1,2), and a_(2,3)*a_(2,4) are in I_B:
transpose mingens I_B
--by symmetry, can assume a_(1,1)=0,  a_(2,3)=0
L1=partialDecomposition (I_B+ideal {a_(1,1),a_(2,3)});
L2=uniqueIdealList flatten (L1/partialDecomposition);
L3=uniqueIdealList flatten (L2/partialDecomposition);
scan(#L3,i->(print i;print transpose mingens L3_i))
--everything has low rank or desired form!
\end{verbatim}
\bibliographystyle{amsalpha}
\renewcommand{\MR}[1]{\relax}
\bibliography{pr}
\end{document}